\newtheorem{theorem}{Theorem}[section]
\newtheorem{lemma}[theorem]{Lemma}
\newtheorem{corollary}[theorem]{Corollary}
\theoremstyle{definition}
\newtheorem{definition}[theorem]{Definition}
\numberwithin{equation}{section}
\DeclareMathOperator{\diam}{diam} 
\newcommand{\be}{\begin{equation}}
\newcommand{\ee}{\end{equation}}
\newcommand{\dist}{{\operatorname{dist}}}
\DeclareMathOperator{\rad}{rad}
\DeclareMathOperator{\supp}{supp}
\DeclareMathOperator{\capacity}{Cap}
\def\Xint#1{\mathchoice 
 {\XXint\displaystyle\textstyle{#1}}%
{\XXint\textstyle\scriptstyle{#1}}%
{\XXint\scriptstyle\scriptscriptstyle{#1}}%
 {\XXint\scriptscriptstyle\scriptscriptstyle{#1}}%
 \!\int}
\def\XXint#1#2#3{{\setbox0=\hbox{$#1{#2#3}{\int}$}
 \vcenter{\hbox{$#2#3$}}\kern-.5\wd0}}
 \def\dashint{\Xint-}
\title{Triebel-Lizorkin capacity and Hausdorff measure in metric spaces}
\author{Nijjwal Karak}
\address{Department of Mathematical Analysis, Charles University, Sokolovsk\'a 83, 18600 Prague 8, Czech Republic}
\email{nijjwal@gmail.com}
\thanks{This work was supported by OP RDE project no. CZ.02.2.69/0.0/0.0/16\_027/0008495, International Mobility of Researcher at Charles University.}
\begin{document}
\begin{abstract}
We provide a upper bound for Triebel-Lizorkin capacity in metric settings in terms of Hausdorff measure. On the other hand, we also prove that the sets with zero capacity have generalized Hausdorff $h$-measure zero for a suitable gauge function $h.$ 
\end{abstract}
\maketitle
\indent Keywords: Triebel-Lizorkin spaces, capacity, Hausdorff measure.\\
\indent 2010 Mathematics Subject Classification: 31E05, 31B15.
\section{Introduction}
In this paper, we study the relation between Triebel-Lizorkin capacities in metric settings and the Hausdorff measure, see Theorem \ref{first-main} and Theorem \ref{second-main}. 
The definition of Triebel-Lizorkin capacity is taken from \cite{HKT17} and it is based on the pointwise definition of fractional $s$-gradients and Haj\l asz-Triebel-Lizorkin space $M^s_{p,q}(X),$ $0<s<\infty$ and $0<p,q\leq\infty,$ introduced in \cite{KYZ11}. The Triebel-Lizorkin capacity of a set $E\subset X$ is
\begin{equation*}
\capacity_{M^s_{p,q}}(E)=\inf\Big\{\Vert u\Vert_{M^s_{p,q}(X)}^p: u\in\mathcal{A}(E)\Big\},
\end{equation*}
where $\mathcal{A}(E)=\{u\in M^s_{p,q}(X): u\geq 1 \,\text{on a neighborhood of}\, E\}.$ The relation between Sobolev capacity and Hausdorff measure has been studied in \cite{BO05, KK15, KM96}, whereas the relation between Besov capacity and Hausdorff measure has been studied in \cite{Cos09, Net92, Nuu16}. We also give an estimate on the exceptional set of Lebesgue points of functions in fractional Sobolev space $W^{s,p}(X),$ $0<s,p<1,$ see Theorem \ref{third-main} and the discussion prior to that. 
\section{Notations and Preliminaries}
We assume throughout that $X=(X,d,\mu)$ is a metric measure space equipped with a metric $d$ and a Borel regular outer measure $\mu.$ 
A Borel regular measure $\mu$ on a metric space $(X,d)$ is called a \textit{doubling measure} if every ball in $X$ has positive and finite measure and there exist a constant $c_d\geq 1$ such that
$\mu(B(x,2r))\leq c_d\,\mu(B(x,r))$
holds for each $x\in X$ and $r>0.$ We call a triple $(X,d,\mu)$ a \textit{doubling metric measure space} if $\mu$ is a doubling measure on $X.$\\
\indent In this article, we will use the concept of $\gamma$-median instead of integral averages, introduced in \cite{PT12}. 
\begin{definition}
Let $0<\gamma\leq 1/2.$ The $\gamma$-median $m_u^{\gamma}(A)$ of a measurable, almost everywhere finite function $u$ over a set $A\subset X$ of finite measure is
 \begin{equation*}
m_u^{\gamma}(A)=\sup\{M\in\mathbb{R}:\mu(\{x\in A:u(x)<M\})\leq\gamma\mu(A)\}.
 \end{equation*}
 \end{definition}
We will mention here some basic properties of medians collected from \cite[Lemma 4.2 \& Remark 4.6]{Nuu16}.
\begin{lemma}
Let $0<\gamma\leq 1/2$ and $A\subset X.$ Then for two measurable functions $u,v:A\rightarrow\mathbb{R},$ the $\gamma$-median has the following properties:\\
$(a)$ If $\gamma\leq\gamma ',$ then $m_u^{\gamma '}\leq m_u^{\gamma},$\\
$(b)$ If $A\subset B$ and $\mu(B)\leq C\mu(A),$ then $m_u^{\gamma}(A)\leq m_u^{\gamma/C}(B),$\\
$(c)$ If $c\in\mathbb{R},$ then $m_u^{\gamma}+c=m_{u+c}^{\gamma},$\\
$(d)$ $\vert m_u^{\gamma}(A)\vert\leq m_{\vert u\vert}^{\gamma}(A),$\\
$(e)$ If $u\in L^p(A),$ $p>0,$ then
$$m_{\vert u\vert}^{\gamma}(A)\leq\left(\gamma ^{-1}\dashint_A\vert u\vert ^p\,d\mu\right)^{1/p},$$
$(f)$ If $u$ is continuous, then for every $x\in X,$
$$\lim_{r\rightarrow 0}m_u^{\gamma}(B(x,r))=u(x).$$
\end{lemma}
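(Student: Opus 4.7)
The plan is to verify the six properties of the $\gamma$-median one at a time, in each case by direct manipulation of the admissible set
\begin{equation*}
S(u,A,\gamma) := \{M\in\R:\mu(\{x\in A:u(x)<M\})\leq\gamma\mu(A)\}
\end{equation*}
whose supremum defines $m_u^\gamma(A)$.

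Property (a) is immediate by comparing $S(u,A,\gamma)$ and $S(u,A,\gamma')$: they differ only in the right-hand threshold, so one is contained in the other. For (b) I would use the inclusion $A\subset B$ to compare $\mu(\{x\in A:u<M\})$ with $\mu(\{x\in B:u<M\})$, and then combine with the doubling-type bound $\mu(B)\leq C\mu(A)$ to convert an admissibility condition at level $\gamma/C$ against $\mu(B)$ into one at level $\gamma$ against $\mu(A)$. Property (c) is a change of dummy variable $M\mapsto M+c$, since $\{u<M\}=\{u+c<M+c\}$, which shows the admissible sets for $u$ and $u+c$ differ by the translation $M\mapsto M+c$.

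For (d), the pointwise inequalities $|u|\geq u$ and $|u|\geq -u$ give set inclusions between the corresponding level sets and yield $m_u^\gamma\leq m_{|u|}^\gamma$ directly; the matching bound $-m_{|u|}^\gamma\leq m_u^\gamma$ requires rewriting $\{-u<M\}=\{u>-M\}$ to convert the $-u$-median into a statement about $u$, after which the same comparison goes through. For (e), Markov's inequality applied to $|u|^p$ gives $\mu(\{|u|\geq M\})\leq M^{-p}\int_A|u|^p\,d\mu$; on the other hand, for any $M$ strictly below $m_{|u|}^\gamma(A)$ the definition forces $\mu(\{|u|\geq M\})\geq(1-\gamma)\mu(A)$. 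Combining these and using $1-\gamma\geq 1/2\geq\gamma$ (from $\gamma\leq 1/2$) yields the claimed bound with constant $\gamma^{-1}$. Finally, (f) is a direct sandwich: continuity gives $|u(y)-u(x)|<\epsilon$ throughout $B(x,r)$ for small enough $r$, so every $M\leq u(x)-\epsilon$ is admissible (empty level set) and every $M>u(x)+\epsilon$ is not (full-measure level set), forcing $m_u^\gamma(B(x,r))\in[u(x)-\epsilon,u(x)+\epsilon]$.

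The only mildly delicate part is (d): because the defining condition uses a strict inequality together with a supremum, the sign change $u\mapsto -u$ does not simply negate $m_u^\gamma$, so one has to unfold both defining families and use the identity $\{-u<M\}=\{u>-M\}$ to reconcile them. Every other part reduces to a one-line comparison of admissible sets.
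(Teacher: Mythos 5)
The paper itself supplies no proof of this lemma---it is quoted verbatim from \cite{Nuu16} (Lemma 4.2 and Remark 4.6)---and your strategy of working directly with the admissible set $S(u,A,\gamma)=\{M:\mu(\{x\in A:u(x)<M\})\le\gamma\mu(A)\}$ is the natural and standard one. Parts (c), (e) and (f) are handled correctly: in (e) your combination of Chebyshev's inequality with $1-\gamma\ge 1/2\ge\gamma$ is exactly the right argument, and the sandwich in (f) is fine. The difficulty is that you take the definition as printed in the paper at face value, and with that definition parts (a), (b) and (d) of the statement are in fact \emph{false}, so any ``immediate'' verification of them must hide a sign error. Concretely, for (a): if $\gamma\le\gamma'$ then $S(u,A,\gamma)\subset S(u,A,\gamma')$, so the suprema satisfy $m_u^{\gamma}\le m_u^{\gamma'}$, which is the \emph{reverse} of the claimed inequality (take $u(x)=x$ on $[0,1]$ with Lebesgue measure: then $m_u^{\gamma}=\gamma$, increasing in $\gamma$). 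Your write-up only says ``one is contained in the other'' without checking which way the containment goes. The same reversal occurs in (b), where the inclusion you describe yields $m_u^{\gamma/C}(B)\le m_u^{\gamma}(A)$ rather than the stated bound. For (d), which you rightly flag as the delicate part, no reconciliation via $\{-u<M\}=\{u>-M\}$ is possible: with $A=[0,1]$, $u(x)=x-1/2$ and $\gamma=1/4$ one computes $m_u^{\gamma}(A)=-1/4$ while $m_{|u|}^{\gamma}(A)=1/8$, so $|m_u^{\gamma}(A)|>m_{|u|}^{\gamma}(A)$.

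The resolution is that the definition printed in Section 2 is a mistranscription of the one used in \cite{PT12} and \cite{Nuu16}, namely $m_u^{\gamma}(A)=\inf\{a\in\mathbb{R}:\mu(\{x\in A:u(x)>a\})<\gamma\mu(A)\}$, an upper-tail quantile rather than a lower-tail one. With that definition all six properties are true, and your overall plan---set inclusions for (a), (b) and (d), translation of the dummy variable for (c), Chebyshev for (e), a continuity sandwich for (f)---does go through essentially as you describe. As written, however, your argument proves reversed versions of (a) and (b) and cannot prove (d) at all; a correct proof must first replace the printed definition by the intended one and then track the direction of each inclusion explicitly.
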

Let us recall the definition of Lebesgue points based on the medians.
 \begin{definition}
Let $0<\gamma\leq 1/2$ and $u$ be a measurable, almost everywhere finite function. A point $x\in X$ is a  \textit{generalized Lebesgue point} of $f,$ if
\begin{equation*}
\lim_{r\rightarrow 0}\,m_u^{\gamma}(B(x,r))=u(x).
\end{equation*}
\end{definition}
We use the pointwise definition of Triebel-Lizorkin space based on the fractional $s$-gradients, introduced in \cite{KYZ11}.
\begin{definition}\label{s-gradient}
Let $S\subset X$ be a measurable set and let $0<s<\infty.$ A sequence of nonnegative measurable functions $(g_k)_{k\in\mathbb{Z}}$ is a fractional $s$-gradient of a measurable function $u:S\rightarrow [-\infty,\infty]$ in $S,$ if there exists a set $E\subset S$ with $\mu(E)=0$ such that
\begin{equation}\label{Hajlasz}
\vert u(x)-u(y)\vert\leq d(x,y)^s\left(g_k(x)+g_k(y)\right)
\end{equation} 
for all $k\in\mathbb{Z}$ and for all $x,y\in S\setminus E$ satisfying $2^{k-1}\leq d(x,y)<2^{k}.$ The collection of all fractional $s$-gradient of $u$ is denoted by $\mathbb{D}^s(u).$
\end{definition}
Let $S\subset X$ be a measurable set. For $0<p,q\leq \infty$ and a sequence $\vec{f}=(f_k)_{k\in\mathbb{Z}}$ of measurable functions, we define
\begin{equation*}
\Vert (f_k)_{k\in\mathbb{Z}}\Vert_{L^p(S,l^q)}=\big\Vert \Vert (f_k)_{k\in\mathbb{Z}}\Vert_{l^q} \big\Vert_{L^p(S)}
\end{equation*}
where
\begin{equation*}
\Vert (f_k)_{k\in\mathbb{Z}}\Vert_{l^q}=
\begin{cases}
(\sum_{k\in\mathbb{Z}}\vert f_k\vert^q)^{1/q},& ~\text{when}~0<q<\infty,\\
\sup_{k\in\mathbb{Z}}\vert f_k\vert,& ~\text{when}~q=\infty.
\end{cases}
\end{equation*}
\begin{definition}
Let $S\subset X$ be a measurable set. Let $0<s<\infty$ and $0<p,q\leq\infty.$ The \textit{homogeneous Haj\l asz-Triebel-Lizorkin space} $\dot{M}^s_{p,q}(S)$ consists of measurable functions $u:S\rightarrow [-\infty,\infty],$ for which the (semi)norm
\begin{equation*}
\Vert u\Vert_{\dot{M}^s_{p,q}(S)}=\inf_{\vec{g}\in\mathbb{D}^s(u)}\Vert\vec{g}\Vert_{L^p(S,l^q)}
\end{equation*}
is finite. The (non-homogeneous) \textit{Haj\l asz-Triebel-Lizorkin space} $M^s_{p,q}(S)$ is $\dot{M}^s_{p,q}(S)\cap L^p(S)$ equipped with the norm
\begin{equation*}
\Vert u\Vert_{M^s_{p,q}(S)}=\Vert u\Vert_{L^p(S)}+\Vert u\Vert_{\dot{M}^s_{p,q}(S)}.
\end{equation*}
Note that, when $0<p<1,$ the (semi)norms defined above are actually quasi-(semi)norms.
\end{definition}
We recall that the \textit{generalized Hausdorff $h$-measure} is defined by
\begin{equation*}
\mathcal{H}^h(E)=\limsup_{\delta\rightarrow 0}H_{\delta}^{h}(E),
\end{equation*}
where
\begin{equation*}
H_{\delta}^{h}(E)=\inf\left\{\sum h(r_i) : E\subset\bigcup B(x_i,r_i),~r_i\leq\delta\right\},
\end{equation*}
where $h:[0,\infty)\rightarrow [0,\infty$ is a non-decreasing function such that $\lim_{t\rightarrow 0+}h(t)=h(0)=0,$ see \cite{KKST08}.\\
\indent For the convenience of reader we state here a fundamental covering lemma (for a proof see \cite[2.8.4-6]{Fed69} or \cite[Theorem 1.3.1]{Zie89}).
\begin{lemma}[5B-covering lemma]\label{cover}
Every family $\mathcal{F}$ of balls of uniformly bounded diameter in a metric space $X$ contains a pairwise disjoint subfamily $\mathcal{G}$ such that for every $B\in\mathcal{F}$ there exists $B'\in\mathcal{G}$ with $B\cap B'\neq\emptyset$ and $\diam(B)<2\diam(B').$ In particular, we have that
$$\bigcup_{B\in\mathcal{F}}B\subset\bigcup_{B\in\mathcal{G}}5B.$$
\end{lemma}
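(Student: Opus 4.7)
The plan is to prove this by a dyadic stratification of $\mathcal{F}$ according to diameter, followed by a greedy selection (via Zorn's lemma) within each shell. The uniform bound on diameters is essential here: without it the shells below run to infinity and one cannot guarantee that every ball is ``caught'' by a selected ball of comparable or larger diameter.

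First I set $R=\sup_{B\in\mathcal{F}}\diam(B)<\infty$ and partition $\mathcal{F}$ into the shells
\[
\mathcal{F}_j=\bigl\{B\in\mathcal{F}:R\cdot 2^{-j}<\diam(B)\le R\cdot 2^{-(j-1)}\bigr\},\qquad j=1,2,3,\dots,
\]
so balls within the same shell have diameters comparable up to a factor of $2$. Next I construct $\mathcal{G}_j\subset\mathcal{F}_j$ inductively: given $\mathcal{G}_1,\dots,\mathcal{G}_{j-1}$, let $\mathcal{G}_j$ be a \emph{maximal} pairwise disjoint subfamily of the collection of balls in $\mathcal{F}_j$ that are disjoint from every element of $\mathcal{G}_1\cup\cdots\cup\mathcal{G}_{j-1}$. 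Maximal such subfamilies exist by Zorn's lemma applied to the partially ordered set of disjoint subcollections (ordered by inclusion), since the union of a chain of disjoint collections is itself disjoint. Set $\mathcal{G}=\bigcup_{j\ge 1}\mathcal{G}_j$; by construction this is pairwise disjoint.

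The verification step is short. Fix $B\in\mathcal{F}$ and let $j$ be the unique index with $B\in\mathcal{F}_j$. By maximality of $\mathcal{G}_j$, either $B\in\mathcal{G}_j$ or $B$ meets some $B'\in\mathcal{G}_1\cup\cdots\cup\mathcal{G}_j$. In either case this $B'$ satisfies $\diam(B')>R\cdot 2^{-j}\ge \diam(B)/2$, i.e.\ $\diam(B)<2\diam(B')$. To see $B\subset 5B'$, write $B=B(x,r)$, $B'=B(x',r')$ and pick $z\in B\cap B'$; then for any $y\in B$,
\[
d(y,x')\le d(y,x)+d(x,z)+d(z,x')<r+r+r'<2(2r')+r'=5r',
\]
using $r<2r'$. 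Hence $B\subset B(x',5r')=5B'$, and unioning over $B\in\mathcal{F}$ gives the final containment.

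The only real obstacle is the set-theoretic one: if $\mathcal{F}$ is uncountable, the inductive existence of maximal disjoint subfamilies within each shell is not obvious and requires Zorn's lemma (or, equivalently, transfinite recursion). The rest—the dyadic partition, the elementary triangle-inequality estimate $d(y,x')<5r'$, and the disjointness of $\mathcal{G}$—is routine once the greedy construction is in place.
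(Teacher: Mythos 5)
The paper does not actually prove this lemma: it is quoted as a standard covering result with references to Federer and Ziemer. Your argument --- dyadic stratification by size followed by a greedy choice of a maximal disjoint subfamily in each shell, with Zorn's lemma supplying maximality --- is precisely the standard proof found in those sources, so in structure you have reproduced the intended argument, including the correct observation that the uniform bound on diameters is what makes the shell decomposition start at a top layer.

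There is, however, one step that does not follow as written. You stratify by $\diam(B)$ and correctly deduce $\diam(B)<2\diam(B')$, but in the containment computation you invoke ``$r<2r'$'' for the \emph{radii}. In a general metric space the diameter of $B(x,r)$ can be much smaller than $2r$ (for instance near an isolated point, or in any space with gaps), so $\diam(B)<2\diam(B')$ does not imply $r<2r'$, and the chain $r+r+r'<2(2r')+r'$ is unjustified. The conclusion is still true and the fix is one line: for $y\in B$ and $z\in B\cap B'$,
$$d(y,x')\le d(y,z)+d(z,x')\le\diam(B)+r'<2\diam(B')+r'\le 2(2r')+r'=5r',$$
using only the universally valid bound $\diam(B')\le 2r'$. (If you instead stratify by radius, $r<2r'$ becomes immediate and your original chain works, but then you would be proving the radius version of the lemma rather than the diameter inequality as stated.) A more pedantic remark: balls of zero diameter --- singletons, which occur at isolated points --- belong to no shell $\mathcal{F}_j$ and are never captured by your selection; this degenerate case is customarily ignored or handled by working with formal radii, but it is worth being aware that your decomposition silently discards such balls.
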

\section{Main results}
\noindent Before we go to our main results, let us recall some lemmas from the literature, which will be used to prove the main results.
\begin{lemma}[\cite{HIT16}, Lemma 3.10]\label{HIT-lem}
Let $0<s<1,$ $0<p<\infty,$ $0<q\leq\infty$ and $S\subset X$ be a measurable set. Let $u:X\rightarrow\mathbb{R}$ be a measurable function with $(g_k)\in\mathbb{D}^s(u)$ and let $\phi$ be a bounded $L$-Lipschitz function supported in $S.$ Then sequences $(h_k)_{k\in\mathbb{Z}}$ and $(\rho_k)_{k\in\mathbb{Z}},$ where
$$\rho_k=(g_k\Vert\phi\Vert_{\infty}+2^{k(s-1)}L\vert u\vert)\chi_{\supp \phi}\quad\text{and}\quad h_k=(g_k+2^{sk+2}\vert u\vert)\Vert\phi\Vert_{\infty}\chi_{\supp \phi}$$ 
are fractional $s$-gradients of $u\phi.$ Moreover, if $u\in M^s_{p,q}(S),$ then $u\phi\in M^s_{p,q}(X)$ and $\Vert u\phi\Vert _{M^s_{p,q}(X)}\leq C\Vert u\Vert _{M^s_{p,q}(S)}.$
\end{lemma}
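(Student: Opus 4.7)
The strategy is to verify directly that each of the two claimed sequences satisfies the pointwise condition of Definition \ref{s-gradient} for $u\phi$, and then to read off the norm estimate from the $L^p(X,l^q)$ bound on these sequences. Throughout, the key tool is the product rule
\[
u(x)\phi(x) - u(y)\phi(y) = \phi(x)\bigl(u(x) - u(y)\bigr) + u(y)\bigl(\phi(x) - \phi(y)\bigr),
\]
combined with the fact that $\supp\phi \subset S$, so that both $\rho_k$ and $h_k$ are supported in $S$.

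Fix $k\in\mathbb{Z}$ and $x,y\in X\setminus E$ with $2^{k-1}\le d(x,y) < 2^k$, where $E$ is the exceptional set for $(g_k)$. If neither $x$ nor $y$ lies in $\supp\phi$, then the left-hand side of \eqref{Hajlasz} is zero and both putative gradients vanish at $x,y$. Otherwise I may assume $x\in\supp\phi$, the other case being symmetric since $\rho_k,h_k\ge 0$ and the right-hand side of \eqref{Hajlasz} is the sum $\rho_k(x)+\rho_k(y)$ (resp.\ $h_k(x)+h_k(y)$). The first summand of the product rule is bounded, using $(g_k)\in\mathbb{D}^s(u)$, by $\|\phi\|_\infty d(x,y)^s(g_k(x)+g_k(y))$, producing the $g_k$-contribution to both gradients.

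For $(\rho_k)$ I would bound the second summand by the Lipschitz estimate $|u(y)|\,|\phi(x)-\phi(y)|\le L|u(y)|\,d(x,y)$, then factor $d(x,y)=d(x,y)^s\cdot d(x,y)^{1-s}$ and use the dyadic bound on $d(x,y)$ to convert $d(x,y)^{1-s}$ into the power of $2$ that appears as the coefficient of $L|u|$ in $\rho_k$. For $(h_k)$ I would instead use the crude bound $|\phi(x)-\phi(y)|\le 2\|\phi\|_\infty$, then write $1=d(x,y)^s\cdot d(x,y)^{-s}$ and again apply the dyadic bound on $d(x,y)$ to produce the power of $2$ prefactor in $h_k$. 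In each case, assembling the two pieces of the product rule gives the full inequality $|u\phi(x)-u\phi(y)|\le d(x,y)^s(\rho_k(x)+\rho_k(y))$ (resp.\ with $h_k$), after noting that the needed $|u|$-term is present at $x$ and the contribution at $y$, if any, only strengthens the bound.

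For the norm inequality I would take an almost-optimal $(g_k)\in\mathbb{D}^s(u)$ realizing $\|u\|_{\dot M^s_{p,q}(S)}$ and form, say, $(h_k)$ as in the statement. The bound $\|u\phi\|_{L^p(X)}\le\|\phi\|_\infty\|u\|_{L^p(S)}$ is immediate from $\supp\phi\subset S$. For the seminorm, the $l^q$-norm of $h_k$ pointwise splits into the $g_k$-contribution, which is controlled in $L^p(S,l^q)$ by $\|\phi\|_\infty\|(g_k)\|_{L^p(S,l^q)}$, and the $|u|$-contribution, which at each point is the scalar $|u(x)|\|\phi\|_\infty$ times the $l^q$-norm of the numerical sequence $(2^{sk+2})_k$; integrating, the latter is controlled by a constant multiple of $\|u\|_{L^p(S)}$. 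The step I expect to require the most care is this last bookkeeping: one must restrict the summation over $k$ to the dyadic scales at which the Hajlasz inequality is actually enforced for $\supp\phi$ (so that the seemingly non-summable prefactor sequence gives a finite $l^q$-norm), which is where the quoted formulation of the lemma really pays its dues. Once this is in hand the remaining work, including the fully symmetric treatment of $x,y\in\supp\phi$, is routine.
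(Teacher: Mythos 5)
Your overall plan (product rule plus the dyadic bound to trade powers of $d(x,y)$ for powers of $2$) is the right one and is how \cite{HIT16} proves this lemma, but two steps fail as written. First, the mixed case: with your fixed decomposition $u\phi(x)-u\phi(y)=\phi(x)(u(x)-u(y))+u(y)(\phi(x)-\phi(y))$, if $x\in\supp\phi$ and $y\notin\supp\phi$ then \emph{both} unabsorbable factors sit at $y$ --- the first summand produces $\Vert\phi\Vert_\infty g_k(y)$ and the second produces $L\,d(x,y)\vert u(y)\vert$ --- while $\rho_k(y)=h_k(y)=0$ because of the cutoff $\chi_{\supp\phi}$, and neither $g_k(y)$ nor $\vert u(y)\vert$ is controlled by anything at $x$. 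So ``the contribution at $y$ only strengthens the bound'' is exactly backwards here. The repair is to decompose adaptively, e.g. $u\phi(x)-u\phi(y)=u(x)(\phi(x)-\phi(y))+\phi(y)(u(x)-u(y))$, so that when $y\notin\supp\phi$ the $u$-difference term is annihilated by $\phi(y)=0$ and the surviving term needs only $\vert u(x)\vert$ at the in-support point. (A side remark: the coefficients $2^{k(s-1)}$ and $2^{sk+2}$ are calibrated to the convention $2^{-k-1}\le d(x,y)<2^{-k}$ of \cite{HIT16}; with Definition \ref{s-gradient} as stated in this paper, $2^{k-1}\le d(x,y)<2^k$, your conversion of $d(x,y)^{1-s}$ and $d(x,y)^{-s}$ into powers of $2$ yields the reciprocal exponents. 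That is an inconsistency of the paper's conventions rather than your error, but carrying out the computation would have exposed it.)

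Second, and more seriously, the step you yourself flag as delicate is where the norm estimate breaks: for $(h_k)$ alone, the $\vert u\vert$-contribution to $\Vert(h_k)_k\Vert_{l^q}$ at any point $x\in\supp\phi$ with $u(x)\neq0$ equals $4\Vert\phi\Vert_\infty\vert u(x)\vert\,\Vert(2^{sk})_{k\in\mathbb{Z}}\Vert_{l^q}=\infty$, and symmetrically $(\rho_k)$ alone fails because $\sum_k2^{k(s-1)q}=\infty$. You cannot fix this by ``restricting the summation over $k$ to the scales enforced for $\supp\phi$'': the defining inequality must hold for every $k\in\mathbb{Z}$, and pairs with one point inside and one outside $\supp\phi$ occur at arbitrarily small scales, so no cofinite set of indices can be discarded. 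The reason the lemma records \emph{two} gradient sequences is precisely so that one can pass to $(\min\{\rho_k,h_k\})_{k\in\mathbb{Z}}$, which is again a fractional $s$-gradient of $u\phi$ (the condition at scale $k$ involves only the $k$-th term of the sequence), and whose $\vert u\vert$-coefficient $\min\{2^{k(s-1)}L,\,2^{sk+2}\Vert\phi\Vert_\infty\}$ decays geometrically in both directions and hence lies in $l^q$. With that replacement your $L^p(X,l^q)$ bookkeeping goes through and gives $\Vert u\phi\Vert_{M^s_{p,q}(X)}\leq C(s,q,L,\Vert\phi\Vert_\infty)\Vert u\Vert_{M^s_{p,q}(S)}$.
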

\begin{corollary}[\cite{HIT16}, Corollary 3.12]\label{HIT-cor}
Let $0<s<1,$ $0<p<\infty$ and $0<q\leq\infty.$ Let $\phi:X\rightarrow\mathbb{R}$ be an $L$-Lipschitz function supported in a bounded set $F\subset X.$ Then $\phi\in M^s_{p,q}(X)$ and
\begin{equation*}
\Vert \phi\Vert _{M^s_{p,q}(X)}\leq C(1+\Vert\phi\Vert _{\infty})(1+L^s)\mu (F)^{1/p}
\end{equation*} 
where the constant $C$ depends only on $s$ and $q.$
\end{corollary}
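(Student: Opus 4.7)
The plan is to exhibit an explicit fractional $s$-gradient of $\phi$ that blends the Lipschitz estimate (sharp at small scales) with the $L^\infty$ estimate (sharp at large scales), supported on $F$, and then to estimate its $L^p(X,l^q)$ norm. An alternative route would be to apply Lemma \ref{HIT-lem} with $u\equiv 1$ and the zero sequence as its $s$-gradient, obtaining two candidate $s$-gradients of $1\cdot\phi=\phi$ that one then combines scale by scale; the direct construction avoids this bookkeeping.

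Explicitly, for each $k\in\Z$ set $g_k=c_k\chi_F$ with
$$c_k=\min\bigl(L\,2^{k(1-s)},\;2^{s+1}\Vert\phi\Vert_\infty\,2^{-sk}\bigr).$$
To verify $(g_k)\in\mathbb{D}^s(\phi)$, fix $k$ and $x,y\in X$ with $2^{k-1}\le d(x,y)<2^k$. If both $x,y\notin F$ the inequality \eqref{Hajlasz} is trivial. Otherwise, the two elementary bounds $|\phi(x)-\phi(y)|\le L\,d(x,y)$ and $|\phi(x)-\phi(y)|\le 2\Vert\phi\Vert_\infty$, together with $d(x,y)^{1-s}<2^{k(1-s)}$ (using $1-s>0$) and $d(x,y)^{-s}\le 2^{s-sk}$, give $|\phi(x)-\phi(y)|\le c_k\,d(x,y)^s$; since at least one of $g_k(x),g_k(y)$ equals $c_k$, the $s$-gradient inequality holds.

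For the norm, the two components of $c_k$ coincide at the crossover scale $k_0\approx\log_2(\Vert\phi\Vert_\infty/L)$, where each equals $\approx L^s\Vert\phi\Vert_\infty^{1-s}$. For $k\le k_0$ the sequence $c_k\le L\,2^{k(1-s)}$ is geometric in $k$ with ratio $2^{1-s}>1$, while for $k>k_0$ the sequence $c_k\le 2^{s+1}\Vert\phi\Vert_\infty\,2^{-sk}$ is geometric with ratio $2^{-s}<1$, so both tails (and the supremum in the case $q=\infty$) contribute at most $C\,L^s\Vert\phi\Vert_\infty^{1-s}$ with $C=C(s,q)$. Consequently
$$\Vert(g_k)\Vert_{L^p(X,l^q)}=\Bigl(\sum_{k\in\Z}c_k^q\Bigr)^{1/q}\mu(F)^{1/p}\le C\,L^s\Vert\phi\Vert_\infty^{1-s}\mu(F)^{1/p},$$
and $\Vert\phi\Vert_{L^p(X)}\le\Vert\phi\Vert_\infty\mu(F)^{1/p}$. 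Adding the two and using $\Vert\phi\Vert_\infty+L^s\Vert\phi\Vert_\infty^{1-s}\le 2(1+\Vert\phi\Vert_\infty)(1+L^s)$ (split into the cases $\Vert\phi\Vert_\infty\ge 1$ and $\Vert\phi\Vert_\infty<1$) yields the claim. The only real obstacle is the interpolation at the crossover $k_0$; once $c_k$ is chosen as the pointwise minimum, the $l^q$ norm is a pair of routine geometric sums.
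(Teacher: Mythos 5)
Your proof is correct, and it is essentially the standard argument: the paper itself offers no proof of this corollary (it is quoted verbatim from \cite{HIT16}), and the proof there amounts to exactly the combination you describe, namely taking at each scale the minimum of the gradient coming from the Lipschitz bound and the one coming from the $L^\infty$ bound (equivalently, applying Lemma \ref{HIT-lem} with $u\equiv 1$) and summing the two geometric tails meeting at the crossover value $L^s\Vert\phi\Vert_\infty^{1-s}$. Your explicit choice $g_k=\min\bigl(L\,2^{k(1-s)},\,2^{s+1}\Vert\phi\Vert_\infty 2^{-sk}\bigr)\chi_F$ is consistent with the convention $2^{k-1}\le d(x,y)<2^k$ used in Definition \ref{s-gradient}, and the norm estimates and the final elementary inequality all check out.
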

\begin{lemma}[\cite{Nuu16}, Theorem 4.5]\label{poincare-type-lem}
Let $0<\gamma\leq 1/2,$ $0<p<\infty$ and $0<q\leq\infty.$ If $u\in M^s_{p,q}(X),$ then there exist a constant $C>0$ and a sequence $(g_k)_{k\in\mathbb{Z}}$ such that
\begin{equation}\label{poincare-type}
\inf_{c\in\mathbb{R}} m^{\gamma}_{\vert u-c\vert}(B(x,2^{-k}))\leq C2^{-ks}\left(\dashint_{B(x,2^{-k+1})}g_k^p\,d\mu\right)^{1/p}
\end{equation}
for every $x\in X$ and $k\in\mathbb{Z}.$ In fact, for given any $(h_j)_{j\in\mathbb{Z}}\in\mathbb{D}^s(u),$ we can choose
\begin{equation}\label{gradient-construct}
g_k=\left(\sum_{j\geq k-2}2^{(k-j)s'p'}h_j^p\right)^{1/p},
\end{equation}
where $0<s'<s$ and $p'=\min\{1,p\}.$ Moreover, there exists a constant $C_1$ such that
\begin{equation}\label{gradient-compare}
\Vert (g_k)\Vert _{L^p(l^q,X)}\leq C_1 \Vert (h_j)\Vert _{L^p(l^q,X)}.
\end{equation} 
\end{lemma}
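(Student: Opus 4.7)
My plan is to prove (\ref{poincare-type}) by a dyadic chaining of medians across a geometric sequence of concentric balls, converting each one-scale increment into an $L^p$-average of the pointwise gradients $h_j$ via properties (b), (c) and (e). Fix $x \in X$ and $k \in \mathbb{Z}$, and set $B_i = B(x, 2^{-k-i})$ for $i \ge 0$. Since any particular choice of constant $c$ upper-bounds the infimum on the left of (\ref{poincare-type}), I would take $c = m^{\gamma}_u(B_0)$; by property (c) of the median it then suffices to bound $m^{\gamma}_{|u - c|}(B_0)$. For $\mu$-a.e.\ generalized Lebesgue point $y \in B_0$, property (f) (or its $M^s_{p,q}$ analogue, valid for a.e.\ $y$) lets me write
\[
u(y) - c \;=\; \sum_{i \ge 0}\bigl(m^{\gamma}_u(B_{i+1}) - m^{\gamma}_u(B_i)\bigr),
\]
so the task reduces to an absolutely summable telescoping bound on the single-scale increments together with the observation that the median on $B_0$ is controlled by the $L^\infty$-norm of the right-hand side on a set of sufficiently large measure.

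The core step is a one-scale Poincar\'e-type estimate of the form
\[
|m^{\gamma}_u(B_{i+1}) - m^{\gamma}_u(B_i)| \;\le\; C\, r_i^{s}\left(\dashint_{2B_i} h_{j(i)}^{p} \, d\mu\right)^{1/p},
\]
with $r_i = 2^{-k-i}$ and $j(i)$ the index that (\ref{Hajlasz}) pairs with distances of order $r_i$. To derive it I would use (b) together with doubling to transfer the median from $B_{i+1}$ to a median on $B_i$, apply (\ref{Hajlasz}) to get the pointwise bound $|u(y) - u(z)| \le C\, r_i^{s}\bigl(h_{j(i)}(y) + h_{j(i)}(z)\bigr)$ for $y, z \in B_i$ paired at the scale of $r_i$, and then pass from the pointwise bound to an $L^p$-average via property (e). The nonlinearity of medians prevents a direct averaging of (\ref{Hajlasz}), but (e) combined with (b) gives the correct workaround: convert the median into an $L^p$ average on a slightly enlarged ball, run a classical Poincar\'e-style argument there, and transfer the estimate back to $B_i$ at the cost of a doubling constant.

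Summing the one-scale estimates over $i \ge 0$ with weights $r_i^{s} = 2^{-(k+i)s}$ and absorbing everything into a single weighted $l^{p'}$-combination of the $h_j$ across all scales produces, after using doubling to bring the integrals onto the fixed ball $B(x, 2^{-k+1})$, exactly the expression (\ref{poincare-type}) with $g_k$ given by (\ref{gradient-construct}); the choice $s' < s$ is what makes the resulting geometric tail summable, and $p' = \min\{1,p\}$ is the quasi-triangle exponent forced on us when $p < 1$. The remaining bound (\ref{gradient-compare}) then reduces to Young's convolution inequality on $l^{q/p}$ (or the Minkowski/quasi-Minkowski inequality when $q/p \le 1$), applied to the convolution structure $g_k^{p} = \sum_{j \ge k - 2} 2^{(k-j)s'p'}\, h_j^{p}$ whose kernel is geometrically decaying in $|j-k|$. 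The step I expect to be the main obstacle is the median one-scale estimate itself: because medians are nonlinear and the centering constant $m^{\gamma}_u(B_i)$ depends on the median being used to control it, some care is needed to avoid circularity, and properties (b)--(e) must be combined in a carefully chosen order so that each passage between medians and $L^p$-averages loses only a controllable constant.
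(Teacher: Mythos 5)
First, note that the paper does not prove this lemma at all: it is imported verbatim from [Nuu16, Theorem 4.5], so your proposal has to be measured against the known proof there, which is a \emph{single-scale} Chebyshev-type argument, not a chaining argument. That proof goes roughly as follows: on $B=B(x,2^{-k})$ one forms the level set $S=\{z\in B: g_k(z)\le\lambda\}$ with $\lambda$ chosen by Chebyshev (property (e)) so that $\mu(S)\ge(1-\gamma)\mu(B)$; for $z,w\in S$ the defining inequality \eqref{Hajlasz} applied at the scale $j$ determined by $d(z,w)$ gives $|u(z)-u(w)|\le C2^{-ks}\bigl(2^{(k-j)s'}h_j(z)+2^{(k-j)s'}h_j(w)\bigr)\le C2^{-ks}\lambda$, since each term $2^{(k-j)s'}h_j$ is dominated by $g_k$ from \eqref{gradient-construct}; taking $c=u(w_0)$ for some $w_0\in S$ then bounds $m^\gamma_{|u-c|}(B)$ by $C2^{-ks}\lambda$ because $|u-c|$ is small on a set of measure exceeding $\gamma\mu(B)$. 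No constant $c$ built from a median of $u$ is ever needed, which is precisely how the circularity you worry about is avoided.

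Your chaining scheme has concrete defects beyond being the harder route. The telescoping identity $u(y)-c=\sum_{i\ge0}\bigl(m^\gamma_u(B_{i+1})-m^\gamma_u(B_i)\bigr)$ with $B_i=B(x,2^{-k-i})$ is false as written: its right-hand side does not depend on $y$, so it could only hold at $y=x$. If you repair it by centering the chain at $y$, each increment $|m^\gamma_u(B(y,2^{-k-i-1}))-m^\gamma_u(B(y,2^{-k-i}))|$ is, after properties (b)--(d), controlled exactly by $\inf_c m^{\gamma'}_{|u-c|}(B(y,2^{-k-i}))$ --- that is, by the very quantity \eqref{poincare-type} asserts at a smaller scale --- so the chaining reduces the lemma to itself and the entire burden still rests on the unproved one-scale estimate, which you only describe as ``combine (b)--(e) in a carefully chosen order.'' Moreover, the repaired chain produces averages $\dashint_{B(y,2^{-k-i+1})}h_{k+i}^p\,d\mu$ over balls whose measure is vanishingly small compared with $\mu(B(x,2^{-k+1}))$; converting these into the single average over $B(x,2^{-k+1})$ appearing in \eqref{poincare-type} costs an unbounded factor unless one first takes a median in $y$ and runs a Fubini/maximal-function argument, a step absent from your outline. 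The only part that stands on its own is the last one: \eqref{gradient-compare} does follow from the convolution structure of $g_k^p$ via a discrete Young/Hardy inequality in $l^{q/p}$, as you say.
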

\begin{lemma}[\cite{Nuu16}, Theorem 4.8]\label{loc-lipscitz}
Let $0<s<1,$ $0<p,q<\infty$ and $E\subset X$ be a compact set. Then 
\begin{equation*}
\capacity _{M^s_{p,q}}(E)\approx\inf\{\Vert u\Vert ^p_{M^s_{p,q}(X)}:u\in\mathcal{A'}(K)\},
\end{equation*}
where $\mathcal{A'}(K)=\{u\in\mathcal{A}(K): u \,\text{is locally Lipschitz}\}.$
\end{lemma}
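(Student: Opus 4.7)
The plan is to show that every $u\in\mathcal{A}(E)$ can be replaced by a locally Lipschitz function in $\mathcal{A}(E)$ with comparable $M^s_{p,q}(X)$--norm, which immediately yields the stated comparability (the trivial direction being $\mathcal{A}'(E)\subset\mathcal{A}(E)$). The replacement is a \emph{discrete median convolution} adapted to the Haj\l asz--Triebel--Lizorkin setting, preceded by two harmless reductions. Given $u\in\mathcal{A}(E)$ with $u\geq 1$ on an open $U\supset E$, I would first truncate $u$ to $[0,2]$ (no enlargement of norm, since truncation preserves any $(h_k)\in\mathbb{D}^s(u)$), and then multiply by $1+\delta$ for a small fixed $\delta>0$ so that $u\geq 1+\delta$ on $U$ at the cost of only a constant factor in norm.

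Fix a small $\varepsilon>0$ and a maximal $\varepsilon$-separated net $\{x_i\}\subset X$, and set $B_i=B(x_i,\varepsilon)$. By Lemma \ref{cover}, $\{5B_i\}$ covers $X$, and the doubling assumption provides a Lipschitz partition of unity $\{\phi_i\}$ with $\supp\phi_i\subset 5B_i$, $\phi_i\gtrsim 1$ on $B_i$, Lipschitz constant $C/\varepsilon$, and bounded overlap. Define
\begin{equation*}
v_\varepsilon(x):=\sum_i m_u^{\gamma}(2B_i)\,\phi_i(x),
\end{equation*}
which is locally Lipschitz. Setting $\rho:=\dist(E,X\setminus U)>0$ and taking $\varepsilon<\rho/20$, every $i$ with $\phi_i(x)\neq 0$ for some $x$ within distance $\rho/4$ of $E$ satisfies $2B_i\subset U$, so $m_u^{\gamma}(2B_i)\geq 1+\delta$ by the definition of the median; summing against the partition of unity yields $v_\varepsilon\geq 1+\delta>1$ on an open neighbourhood of $E$. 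Thus $v_\varepsilon\in\mathcal{A}'(E)$.

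The core of the proof is the norm estimate $\|v_\varepsilon\|_{M^s_{p,q}(X)}\leq C\|u\|_{M^s_{p,q}(X)}$. For any $(h_j)\in\mathbb{D}^s(u)$, Lemma \ref{poincare-type-lem} produces $(g_k)$ satisfying the median--Poincar\'e inequality with $\|(g_k)\|_{L^p(X,l^q)}\leq C\|(h_j)\|_{L^p(X,l^q)}$. For $x,y$ with $2^{k-1}\leq d(x,y)<2^k$, I would write
\begin{equation*}
v_\varepsilon(x)-v_\varepsilon(y)=\sum_i\bigl(m_u^{\gamma}(2B_i)-c_{x,k}\bigr)\bigl(\phi_i(x)-\phi_i(y)\bigr),
\end{equation*}
with $c_{x,k}$ a median of $u$ on a ball of radius $\sim 2^k$ around $x$. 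Telescoping $m_u^{\gamma}(2B_i)-c_{x,k}$ across the dyadic scales from $\varepsilon$ up to $2^k$ using the median--Poincar\'e inequality, and combining with the Lipschitz estimate $|\phi_i(x)-\phi_i(y)|\leq\min\{1,Cd(x,y)/\varepsilon\}$ together with bounded overlap, one produces a fractional $s$-gradient $\tilde g_k$ of $v_\varepsilon$ that is pointwise dominated by a shifted discrete convolution of the $g_j$'s. A vector-valued Fefferman--Stein maximal inequality on the doubling space then yields the desired $L^p(X,l^q)$--bound, while the $L^p$-part of the norm is handled in parallel using property $(e)$ of medians. The main obstacle is this telescoping step in the quasi-Banach regime $p,q<1$: the summability of the resulting geometric series and the applicability of the maximal inequality rely crucially on the parameters $s'<s$ and $p'=\min\{1,p\}$ furnished by \eqref{gradient-construct}.
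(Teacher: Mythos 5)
The paper does not actually prove Lemma \ref{loc-lipscitz}: it imports it from \cite{Nuu16}, Theorem 4.8, with the remark that the Besov-space proof carries over verbatim to the Triebel--Lizorkin scale, and your outline --- truncation, the $1+\delta$ dilation, the discrete median convolution $v_\varepsilon=\sum_i m_u^{\gamma}(2B_i)\,\phi_i$ over a partition of unity, and the telescoping norm estimate via the median Poincar\'e inequality --- is precisely the strategy of that cited proof. The one genuinely delicate point, the $L^p(X,l^q)$ bound for a fractional $s$-gradient of $v_\varepsilon$ in the quasi-Banach range $p,q<1$ (where the vector-valued maximal inequality must be applied to $g^r$ with $r<\min\{1,p,q\}$ and the decay $2^{(k-j)s'p'}$ from \eqref{gradient-construct} exploited), is exactly the part you leave as a sketch, and it is also where the bulk of the work in \cite{Nuu16} lies.
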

\noindent Note that the above lemma is proved in \cite{Nuu16} for Haj\l asz-Besov spaces, but the proof for Haj\l asz-Triebel-Lizorkin spaces is exactly the same.
\begin{lemma}[\cite{HKT17}, Lemma 6.4]\label{HKT-lem}
Let $0<s<\infty$ and $0<p,q\leq\infty.$ Then there exist $c\geq 1$ and $0<r\leq 1$ such that
\begin{equation}\label{cap-eqn}
\capacity _{M^s_{p,q}}\left(\bigcup_{i\in\mathbb{N}}E_i\right)^r\leq c\sum_{i\in\mathbb{N}}\capacity _{M^s_{p,q}}(E_i)^r
\end{equation}
for all sets $E_i\subset X.$ Actually, \eqref{cap-eqn} holds with $r=\min\{1,q/p\}.$
\end{lemma}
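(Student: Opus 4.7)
The plan is to take near-optimal admissible functions for each $E_i$ and glue them together by a supremum to produce an admissible function for $\bigcup_i E_i$ whose $M^s_{p,q}$-norm is controlled. The exponent $r=\min\{1,q/p\}$ will emerge naturally from whether Minkowski's inequality in $L^{p/q}$ is available (when $p/q\geq 1$) or whether we instead rely on the elementary subadditivity $(\sum_i a_i)^{\theta}\leq\sum_i a_i^\theta$ for $\theta\leq 1$.

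\textbf{Step 1 (setup).} Fix $\varepsilon>0$ and choose $u_i\in\mathcal{A}(E_i)$ with $\Vert u_i\Vert_{M^s_{p,q}(X)}^p\leq \capacity_{M^s_{p,q}}(E_i)+\varepsilon/2^i$. By replacing $u_i$ with $\min\{\max\{u_i,0\},1\}$ we may assume $0\leq u_i\leq 1$; this truncation does not enlarge any fractional $s$-gradient (the Haj\l asz inequality \eqref{Hajlasz} only becomes tighter), nor the $L^p$-norm, and each $u_i$ still equals $1$ on a neighborhood $V_i$ of $E_i$. Pick $(g_k^{(i)})_{k\in\Z}\in\mathbb{D}^s(u_i)$ with $\Vert (g_k^{(i)})\Vert_{L^p(X,l^q)}^p\leq \Vert u_i\Vert_{\dot{M}^s_{p,q}(X)}^p+\varepsilon/2^i$.

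\textbf{Step 2 (combining).} Let $u=\sup_i u_i$ and $G_k=\sup_i g_k^{(i)}$ (both countable suprema, hence measurable). Since $u\geq 1$ on the open set $\bigcup_i V_i\supset\bigcup_i E_i$, we have $u\in\mathcal{A}(\bigcup_i E_i)$. For any $x,y$ outside the union of exceptional sets, elementary arguments give $\vert u(x)-u(y)\vert\leq \sup_i\vert u_i(x)-u_i(y)\vert\leq d(x,y)^s\bigl(G_k(x)+G_k(y)\bigr)$ whenever $2^{k-1}\leq d(x,y)<2^k$; thus $(G_k)\in\mathbb{D}^s(u)$. The key pointwise bound is then $G_k(x)^q\leq\sum_i g_k^{(i)}(x)^q$, so writing $F_i(x)=\sum_k g_k^{(i)}(x)^q$ we obtain
\begin{equation*}
\Vert (G_k(x))\Vert_{l^q}^q=\sum_k G_k(x)^q\leq\sum_i F_i(x).
\end{equation*}

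\textbf{Step 3 (splitting by cases).} For the $L^p$-part, since $p>0$, a standard check gives $\Vert u\Vert_{L^p(X)}^p\leq\sum_i\Vert u_i\Vert_{L^p(X)}^p$. For the seminorm, consider two cases. If $q\geq p$ then $p/q\leq 1$ and the pointwise inequality $(\sum_i F_i)^{p/q}\leq\sum_i F_i^{p/q}$ integrated over $X$ gives $\Vert u\Vert_{\dot{M}^s_{p,q}(X)}^p\leq\sum_i\Vert u_i\Vert_{\dot{M}^s_{p,q}(X)}^p$; combining with the $L^p$-bound and the elementary inequality $(a+b)^p\leq C_p(a^p+b^p)$, we reach $\Vert u\Vert_{M^s_{p,q}(X)}^p\leq C\sum_i\Vert u_i\Vert_{M^s_{p,q}(X)}^p$, which after sending $\varepsilon\to 0$ yields \eqref{cap-eqn} with $r=1$. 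If $q<p$ then $p/q>1$ and Minkowski's inequality in $L^{p/q}(X)$ applied to $\sum_i F_i$ gives
\begin{equation*}
\Vert u\Vert_{\dot{M}^s_{p,q}(X)}^q\leq\Big\Vert\sum_i F_i\Big\Vert_{L^{p/q}(X)}\leq\sum_i\Vert F_i\Vert_{L^{p/q}(X)}=\sum_i\Vert u_i\Vert_{\dot{M}^s_{p,q}(X)}^q.
\end{equation*}
Since $q/p\leq 1$, we also have $\Vert u\Vert_{L^p(X)}^q=\bigl(\Vert u\Vert_{L^p(X)}^p\bigr)^{q/p}\leq\sum_i\Vert u_i\Vert_{L^p(X)}^q$, and combining with $(a+b)^q\leq C_q(a^q+b^q)$ gives $\Vert u\Vert_{M^s_{p,q}(X)}^q\leq C\sum_i\Vert u_i\Vert_{M^s_{p,q}(X)}^q$, which after $\varepsilon\to 0$ is precisely \eqref{cap-eqn} with $r=q/p$.

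\textbf{Main obstacle.} The only substantive point is the verification in Step 2 that $G_k=\sup_i g_k^{(i)}$ is a genuine fractional $s$-gradient of $u=\sup_i u_i$; everything else is a dichotomy between concavity and convexity of $t\mapsto t^{p/q}$ together with Fubini. One must also justify the truncation step for general $0<p,q\leq\infty$, but this is immediate from \eqref{Hajlasz} as the same sequence $(g_k^{(i)})$ works for the truncated function.
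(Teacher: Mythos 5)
The paper does not prove this lemma at all --- it is quoted verbatim from \cite{HKT17}, Lemma 6.4 --- so there is no in-paper argument to compare against; your sup-of-test-functions construction ($u=\sup_i u_i$, $G_k=\sup_i g_k^{(i)}$, then splitting on whether $p/q\le 1$ or $p/q>1$) is the standard proof of this subadditivity and is essentially the one given in the cited reference, and I find it correct. The only loose end is the case $q=\infty$ (allowed by the statement, where $r=1$): your pointwise bound $G_k^q\le\sum_i (g_k^{(i)})^q$ and the definition $F_i=\sum_k (g_k^{(i)})^q$ must be replaced by the even easier $\Vert (G_k)\Vert_{l^\infty}\le\sup_i\Vert (g_k^{(i)})\Vert_{l^\infty}$ followed by $\sup_i a_i^p\le\sum_i a_i^p$; with that one-line amendment the argument covers the full stated range.
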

Our first result is the following:
\begin{theorem}\label{first-main}
Let $0<s<1,$ $0<p<\infty$ and $0<q\leq \infty.$ Then for any $E\subset X,$ there exists a constant $C$ such that
\begin{equation*}
\capacity_{M^s_{p,q}}(E)^{\theta}\leq C\mathcal{H}^h(E),
\end{equation*}
where $\theta=\min\{1,q/p\}$ and $h(B(x,\rho))=\left(\frac{\mu(B(x,\rho))}{\rho^{sp}}\right)^{\theta}.$
\end{theorem}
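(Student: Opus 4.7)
The plan is to run a cover-and-cutoff argument coupled with the quasi-subadditivity of capacity from Lemma~\ref{HKT-lem}. We may assume $\mathcal{H}^h(E)<\infty$, else the estimate is trivial. Fix any $\delta\in(0,1]$ and $\varepsilon>0$ and choose a cover $\{B_i\}=\{B(x_i,r_i)\}$ of $E$ with $r_i\le\delta$ satisfying $\sum_i h(B_i)\le H_\delta^h(E)+\varepsilon$. The aim is to bound $\capacity_{M^s_{p,q}}(B_i)$ by a multiple of $h(B_i)^{1/\theta}$ and then combine.

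For each $i$, introduce the Lipschitz cutoff
\[\phi_i(x)=\max\Bigl\{0,\,\min\bigl\{1,\,2-\tfrac{d(x,x_i)}{2r_i}\bigr\}\Bigr\},\]
which satisfies $0\le\phi_i\le 1$, $\phi_i\equiv 1$ on the \emph{open} ball $2B_i$ (so that $\phi_i\in\mathcal{A}(B_i)$), $\supp\phi_i\subset 4B_i$, and is $L$-Lipschitz with $L\le 1/(2r_i)$. Applying Corollary~\ref{HIT-cor} with this $L$ and $F=4B_i$, then using $r_i\le 1$ together with the doubling property of $\mu$, we obtain
\[\|\phi_i\|_{M^s_{p,q}(X)}\le C(1+r_i^{-s})\,\mu(4B_i)^{1/p}\le C'\,r_i^{-s}\,\mu(B_i)^{1/p}.\]
Raising to the $p$-th power and using the definition of capacity,
\[\capacity_{M^s_{p,q}}(B_i)\le\|\phi_i\|_{M^s_{p,q}(X)}^p\le C\,\frac{\mu(B_i)}{r_i^{sp}}=C\,h(B_i)^{1/\theta}.\]

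Since $E\subset\bigcup_i B_i$, monotonicity of capacity together with Lemma~\ref{HKT-lem} (with $r=\theta$) yields
\[\capacity_{M^s_{p,q}}(E)^\theta\le c\sum_i\capacity_{M^s_{p,q}}(B_i)^\theta\le C''\sum_i h(B_i)\le C''\bigl(H_\delta^h(E)+\varepsilon\bigr).\]
Sending $\varepsilon\to 0$ and using $H_\delta^h(E)\le\mathcal{H}^h(E)$ completes the proof. The subtlety I anticipate is the neighborhood requirement built into $\mathcal{A}(B_i)$: a cutoff equal to $1$ only on the closed ball $\overline{B_i}$ would generally fail to be admissible for $B_i$. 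Having $\phi_i$ equal $1$ on the enlargement $2B_i$ circumvents this while only affecting constants. A secondary point is the restriction $\delta\le 1$, which is needed so that the factor $(1+L^s)$ in Corollary~\ref{HIT-cor} is controlled by $r_i^{-s}$; this is harmless because $H_\delta^h(E)\le\mathcal{H}^h(E)$ for every $\delta>0$.
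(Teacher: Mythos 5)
Your proposal is correct and follows essentially the same route as the paper: a Lipschitz cutoff estimated via Corollary~\ref{HIT-cor} and doubling gives $\capacity_{M^s_{p,q}}(B_i)\le C\,\mu(B_i)/r_i^{sp}$, and Lemma~\ref{HKT-lem} with $r=\theta$ sums the contributions over a cover. The only differences are cosmetic (your cutoff equals $1$ on $2B_i$ rather than on $B_i$ itself, and you make the $\varepsilon$--$\delta$ bookkeeping explicit), and your worry about the neighborhood condition is moot since $B_i$ is open.
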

\begin{proof}
First let us show that, for $x\in X$ and $0<r\leq 1,$ there exists a constant $C_1$ such that 
\begin{equation}\label{cap-measure}
\capacity_{M^s_{p,q}}(B(x,r))\leq C_1\frac{\mu(B(x,r))}{r^{sp}}.
\end{equation}
Indeed, let us define a function $u:X\rightarrow\mathbb{R}$ by
\begin{equation*}
u(y)=\min\left\{1,\frac{\dist(y,X\setminus B(x,2r))}{r}\right\}.
\end{equation*}
Clearly, $u$ is a $\frac{1}{r}$-Lipschitz function supported in $B(x,2r)$ which takes $1$ in $B(x,r)$ and $0\leq u\leq 1$ on $X.$ Hence, from Corollary \ref{HIT-cor} and the doubling property of $\mu,$ we obtain
\begin{equation*}
\capacity_{M^s_{p,q}}(B(x,r))\leq \Vert u\Vert_{M^s_{p,q}(X)}^p\leq C_1\frac{\mu(B(x,r))}{r^{sp}}.
\end{equation*}
Now, let $\{B(x_i,r_i)\}_{i\in\mathbb{N}}$ be a covering of the set $E$ such that $r_i\leq 1$ for all $i.$ Then from Lemma \ref{HKT-lem}, we get
\begin{eqnarray*}
\capacity_{M^s_{p,q}}(E)^{\theta} &\leq &\left[\capacity_{M^s_{p,q}}(\cup_i B(x_i,r_i))\right]^{\theta}\\
&\leq & c\sum_i \left[\capacity_{M^s_{p,q}} B(x_i,r_i)\right]^{\theta}\\
&\leq & C_2\sum_i\left[\frac{\mu(B(x_i,r_i))}{r_i^{sp}}\right]^{\theta}.
\end{eqnarray*}
After taking the infimum over all the coverings $\{B(x_i,r_i)\}_{i\in\mathbb{N}}$ of $E,$ we conclude the theorem.
\end{proof}
Our main result is the following:
\begin{theorem}\label{second-main}
 Let $x_0\in X,$ $0<R<\infty$ and assume that $B(x_0,8R)\setminus B(x_0,4R)$ is nonempty. If for every compact set $E\subset B(x_0,R),$ $\capacity_{M^s_{p,q}}(E)=0,$ where $0<s<1,$ and $0<p, q<\infty,$ then $\mathcal{H}^h(E)=0$
whenever $$h(B(x,\rho))=\frac{\mu(B(x,\rho))}{\rho ^{sp}}\log^{-p-\epsilon}(1/\rho)$$ for any $\epsilon>0.$
\end{theorem}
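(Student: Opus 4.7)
The plan is to use the capacity-zero hypothesis to produce a sequence of locally Lipschitz test functions with vanishing $M^s_{p,q}$-norms, and from each such function to extract a covering of $E$ by balls whose total $h$-weight is controlled by the associated fractional gradient. By Lemma~\ref{loc-lipscitz}, for each $i\in\N$ I will take a locally Lipschitz $u_i\in\mathcal{A}(E)$ with $\|u_i\|_{M^s_{p,q}(X)}^p<2^{-i}$; after truncation I may assume $0\le u_i\le 1$ and $u_i\equiv 1$ on a neighbourhood of $E$. Lemma~\ref{poincare-type-lem} then supplies an associated sequence $(g_k^{(i)})_{k\in\Z}$ satisfying the Poincar\'e-type bound and $\|(g_k^{(i)})\|_{L^p(X,l^q)}\lesssim\|u_i\|_{M^s_{p,q}(X)}$.

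The heart of the argument will be a telescoping inequality. Fix an integer $K$ with $2^{-K}\approx 16R$: the hypothesis $B(x_0,8R)\setminus B(x_0,4R)\neq\emptyset$, combined with doubling, gives $\mu(B(x,2^{-K}))\le C\mu(B(x_0,R))$ uniformly for $x\in B(x_0,R)$, and then property $(e)$ of the $\gamma$-median provides $m^\gamma_{u_i}(B(x,2^{-K}))\lesssim 2^{-i/p}\le 1/2$ for $i$ large. Since continuity of $u_i$ together with property $(f)$ yields $u_i(x)=1$ for $x\in E$, I will expand $u_i(x)-m^\gamma_{u_i}(B(x,2^{-K}))$ as a telescoping series over dyadic scales, bounding each median jump by properties $(b)$--$(d)$ and Lemma~\ref{poincare-type-lem}, arriving at
\[
\tfrac12\le C\sum_{k\ge K}2^{-ks}\Bigl(\dashint_{B(x,2^{-k+1})}(g_k^{(i)})^p\,d\mu\Bigr)^{1/p},\qquad x\in E.
\]
Applying a weighted pigeonhole with the summable weight $k^{-1-\delta}$ ($\delta:=\epsilon/(2p)$) then supplies for each $x\in E$ a scale $k_x\ge K$ and radius $r_x=2^{-k_x+1}$ satisfying
\[
\frac{\mu(B(x,r_x))}{r_x^{sp}}\log^{-p-\epsilon}(1/r_x)\lesssim k_x^{-\epsilon/2}\int_{B(x,r_x)}(g_{k_x}^{(i)})^p\,d\mu.
\]
The radii $r_x$ are uniformly bounded, so Lemma~\ref{cover} extracts a pairwise disjoint subfamily $\{B_\ell\}$ whose $5$-enlargements cover $E$, and doubling absorbs the factor $5$ in $h$.

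Finally I will group by scale and use disjointness to obtain
\[
H^h_{2^{-K+3}}(E)\le\sum_\ell h(5B_\ell)\lesssim\sum_k k^{-\epsilon/2}\int_X(g_k^{(i)})^p\,d\mu.
\]
When $q\le p$ the embedding $l^q\subset l^p$ immediately bounds the right-hand side by $\|(g_k^{(i)})\|_{L^p(l^q)}^p\lesssim 2^{-i}$, which tends to $0$. For $q>p$ a pointwise H\"older inequality in the scale index with exponents $q/p$ and $q/(q-p)$ converts $\sum_k k^{-\epsilon/2}(g_k^{(i)})^p$ into $\lesssim\bigl(\sum_k(g_k^{(i)})^q\bigr)^{p/q}$, and integration then gives the same conclusion (after a slight readjustment of $\delta$). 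Letting $i\to\infty$ shows $H^h_{2^{-K+3}}(E)=0$, and since $K$ may be taken arbitrarily large this forces $\mathcal{H}^h(E)=0$. The principal obstacle will be tuning the pigeonhole weight so that the gauge $\log^{-p-\epsilon}$ emerges with precisely the required exponent; the auxiliary H\"older step needed in the regime $q>p$, compensating for the failure of $L^p(l^q)$ to dominate $L^p(l^p)$, is the chief additional subtlety beyond the corresponding Besov-space argument.
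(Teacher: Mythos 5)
Your overall architecture (locally Lipschitz test functions via Lemma \ref{loc-lipscitz}, the gradient sequence of Lemma \ref{poincare-type-lem}, a telescoping median estimate, a weighted pigeonhole, and the $5B$-covering lemma) matches the paper's, and your treatment of the coarse-scale term is a genuine, workable variant: instead of cutting the test function off with a Lipschitz $\psi$ and using a point of the annulus $B(x_0,8R)\setminus B(x_0,4R)$ where the product vanishes to control $\vert m^\gamma_u(B(x,R))\vert$, you push the base scale out to $2^{-K}\approx 16R$ so that $B(x_0,R)\subset B(x,2^{-K})$ for $x\in E$, and then properties $(d)$--$(e)$ of the median give $\vert m^\gamma_{u_i}(B(x,2^{-K}))\vert\lesssim\bigl(\gamma^{-1}\mu(B(x_0,R))^{-1}\Vert u_i\Vert_{L^p}^p\bigr)^{1/p}\to 0$. (You need $\mu(B(x,2^{-K}))$ bounded \emph{below}, not above as written, but that is immediate from the inclusion; your route arguably does not even use the annulus hypothesis.)

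The genuine gap is the final summation in the regime $q>p$. After the pigeonhole you are left with $\sum_{k}k^{-\epsilon/2}\int_X(g_k^{(i)})^p\,d\mu$, and your H\"older step with exponents $q/p$ and $q/(q-p)$ produces the factor $\bigl(\sum_k k^{-\frac{\epsilon q}{2(q-p)}}\bigr)^{(q-p)/q}$, which is finite only when $\epsilon>2(1-p/q)$; since the theorem is claimed for every $\epsilon>0$, this fails already for, say, $p=1$, $q=2$, $\epsilon=1/10$. No readjustment of the pigeonhole weight $k^{-1-\delta}$ rescues it: to keep the resulting bound $h(B_\ell)\lesssim k^{p\delta-\epsilon}\int_{B_\ell}(g_k^{(i)})^p\,d\mu$ usable and the conjugate sum convergent you would need $\epsilon-p\delta>1-p/q$ with $\delta>0$, hence $\epsilon>1-p/q$, which again fails for small $\epsilon$. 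The paper sidesteps this entirely by majorizing $g_k\le\Vert(g_j)_{j\in\Z}\Vert_{l^q}$ pointwise \emph{inside} the integral before pigeonholing (inequality \eqref{combine}); the integrand is then scale-independent, so disjointness of the selected balls alone yields $\sum_\ell\int_{B_\ell}\Vert(g_j)\Vert_{l^q}^p\,d\mu\le\Vert(g_j)\Vert_{L^p(X,l^q)}^p$ with no H\"older in $k$ and no restriction on $\epsilon$. Inserting that one-line majorization into your telescoping estimate repairs the argument; the rest of your proof is sound.
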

\begin{proof}
For simplicity, we assume that $R=2^{-m}$ for some $m\in\mathbb{Z}.$ Let $E\subset B(x_0,2^{-m})$ be a compact set. By Lemma \ref{loc-lipscitz}, we get a locally Lipschitz function $v\in M^s_{p,q}(X)$ which satisfies
\begin{equation*}
\Vert v\Vert ^p_{M^s_{p,q}(X)}\leq C\capacity _{M^s_{p,q}}(E)+\delta
\end{equation*}
for every $\delta>0$ and also $v\geq 1$ on a neighborhood of $E.$ By taking another Lipschitz function $\psi$ which satisfies $\psi=1$ on $B(x_0,2^{-m})$ and $\psi=0$ outside $B(x_0,2^{-m+1}),$ we consider $u=v\psi.$ Note that $u\in M^s_{p,q}(X)$ is Lipschitz and $u\geq 1$ on a neighborhood of $E.$ By Lemma \ref{HIT-lem}, there exists $(g_k)_{k\in\mathbb{Z}}\in\mathbb{D}^s(u)$ such that $g_k$ is supported in $B(x_0,2^{-m+1})$ and
\begin{equation}\label{capacity_gradient}
\Vert (g_k)\Vert ^p_{L^p(l^q,X)}\leq C_1\Vert v\Vert ^p_{M^s_{p,q}(X)}\leq C_2(\capacity _{M^s_{p,q}}(E)+\delta).
\end{equation}
In fact, we can choose $(g_k)$ so that it satisfies \eqref{poincare-type} and the inequalities \eqref{gradient-construct} and \eqref{gradient-compare} ensure that $g_k$ is supported in $B(x_0,2^{-m+1})$ and satisfies \eqref{capacity_gradient}.

Let $x\in E.$ Then
\begin{equation}\label{first}
1\leq u(x)\leq \vert u(x)-m_u^{\gamma}(B(x,2^{-m}))\vert+\vert m_u^{\gamma}(B(x,2^{-m})) \vert.
\end{equation}
Since $u$ is continuous, $x$ is a generalized Lebesgue point of $u.$ Therefore, by using the properties of $\gamma$-median, we obtain
\begin{eqnarray*}
\vert u(x)-m_u^{\gamma}(B(x,2^{-m}))\vert &\leq & \sum_{k\geq m}\vert m_u^{\gamma}(B(x,2^{-k-1}))- m_u^{\gamma}(B(x,2^{-k}))\vert\\
&\leq & \sum_{k\geq m} m_{\vert u- m_u^{\gamma}(B(x,2^{-k}))\vert}^{\gamma}(B(x,2^{-k-1}))\\
&\leq & \sum_{k\geq m}  m_{\vert u- m_u^{\gamma}(B(x,2^{-k}))\vert}^{\gamma/c_d}(B(x,2^{-k}))\\
&\leq & 2\sum_{k\geq m} \inf_{c\in\mathbb{R}} m_{\vert u- c\vert}^{\gamma '}(B(x,2^{-k})),
\end{eqnarray*}
where $\gamma '=\min\{\gamma, \gamma /c_d\}.$
We use Lemma \ref{poincare-type-lem} to conclude
\begin{equation*}
\vert u(x)-m_u^{\gamma}(B(x,2^{-m}))\vert \leq C\sum_{k\geq m}2^{-ks}\left(\dashint_{B(x,2^{-k+1})}g_k^p\,d\mu\right)^{1/p}.
\end{equation*}
Let $y\in B(x,2^{-m})\setminus F$ and $z\in (B(x_0,2^{-m+3})\setminus B(x_0,2^{-m+2}))\setminus F,$ where $F$ is the exceptional set from Definition \ref{s-gradient}. Note that $2^{-m}\leq d(y,z)<2^{-m+4}.$ After defining $g=\max\{g_k:m-4\leq k\leq m-1\},$ we obtain
\begin{eqnarray*}
\vert u(y)\vert=\vert u(y)-u(z)\vert\leq d(y,z)^sg(y)\leq 2^{s(-m+4)}g(y).
\end{eqnarray*}
Again by using the properties of median, we get
\begin{eqnarray*}
\vert m_u^{\gamma}(B(x,2^{-m})) \vert &\leq & m_{2^{s(-m+4)}g}^{\gamma}(B(x,2^{-m}))\\
&\leq & C2^{-ms}m_g^{\gamma}(B(x,2^{-m}))\\
&\leq & C\sum_{k=m-4}^{m-1}2^{-ks}\left(\dashint_{B(x,2^{-k+1})}g_k^p\,d\mu\right)^{1/p}.
\end{eqnarray*}
Using these estimates in \eqref{first}, we obtain
\begin{eqnarray}\label{combine}
1 &\leq & C\sum_{k\geq m-4}2^{-ks}\left(\dashint_{B(x,2^{-k+1})}g_k^p\,d\mu\right)^{1/p} \nonumber\\
&\leq & C\sum_{k\geq m-4}2^{-ks}\left(\dashint_{B(x,2^{-k+1})}\Vert (g_j)_{j\in\mathbb{Z}}\Vert_{l^q}^p\,d\mu\right)^{1/p}.
\end{eqnarray}

\noindent Set $M=C^p(\log 2)^{-p-\epsilon}\big(\sum_{k\geq m-4}\frac{1}{(k-1)^{1+\epsilon/p}}\big)^p,$ which depends only on $\epsilon, p$ and $m.$
If for every $k\geq m-4,$ 
\begin{equation*}
M\int_{B(x,2^{-k+1})}\Vert (g_j)_{j\in\mathbb{Z}}\Vert_{l^q}^p\,d\mu< \frac{\mu(B(x,2^{-k+1}))}{2^{(-k+1)sp}}\log ^{-p-\epsilon}\Big(\frac{1}{2^{-k+1}}\Big),
\end{equation*}
then \eqref{combine} becomes
\begin{equation*}
1<\frac{C}{M^{1/p}}(\log 2)^{-1-\frac{\epsilon}{p}}\sum_{k\geq m-4}(k-1)^{-1-\frac{\epsilon}{p}},
\end{equation*}
which is a contradiction. Therefore, there exists a $k_x\geq m-4$ such that
\begin{equation*}
M\int_{B(x,2^{-k_x+1})}\Vert (g_j)_{j\in\mathbb{Z}}\Vert_{l^q}^p\,d\mu\geq \frac{\mu(B(x,2^{-k_x+1}))}{2^{(-k_x+1)sp}}\log ^{-p-\epsilon}\Big(\frac{1}{2^{-k_x+1}}\Big).
\end{equation*}
By $5B$-covering lemma, pick a collection of disjoint balls $B_i=B(x_i,2^{-k_{x_i}+1})$ such that $E\subset \cup_i5B_i.$ Now, summing over $i,$
\begin{equation*}
M\Vert (g_j)_{j\in\mathbb{Z}} \Vert^p_{L^p(X, l^q)}\geq \sum_i\frac{\mu(B_i)}{\rad(B_i)^{sp}}\log^{-p-\epsilon}\left(\frac{1}{\rad(B_i)}\right),
\end{equation*}
where $M$ is independent of $u,g.$ Since $\capacity_{M^s_{p,q}}(E)=0,$ we see from \eqref{capacity_gradient} that $\Vert (g_j)_{j\in\mathbb{Z}} \Vert^p_{L^p(X, l^q)}$ can be made arbitrarily small and hence $\mathcal{H}^h_{\infty}(E)=0.$  
\end{proof}
\noindent For $s\in (0,1)$ and $p\in (0,\infty),$ the fractional Sobolev space is the space of all $u\in L^p(\mathbb{R}^n)$ such that
\begin{equation*}
\Vert u\Vert_{W^{s,p}(\mathbb{R}^n)}:=\left(\int_{\mathbb{R}^n}\vert u(x)\vert^p\,dx+\int_{\mathbb{R}^n}\int_{\mathbb{R}^n}\frac{\vert u(x)-u(y)\vert^p}{\vert x-y\vert^{n+sp}}\,dx\,dy\right)^{1/p}<\infty.
\end{equation*}
We prove here, using the idea of \cite{Yua15}, the existence of Lebesgue points for $W^{s,p}(\mathbb{R}^n),$ when $s\in (0,1)$ and $p<1.$ When $p\geq 1,$ we know that $W^{s,p}(\mathbb{R}^n)=F^s_{p,p}(\mathbb{R}^n)=M^s_{p,p}(\mathbb{R}^n),$ \cite{KYZ11, Tri83} and hence in this case, Theorem \ref{second-main} together with \cite[Theorem 1.2]{HKT17} give the existence of Lebesgue points. Also see \cite{Kar17} for a similar result in metric spaces.
\begin{theorem}\label{third-main}
Suppose $u\in W^{s,p}(\mathbb{R}^n),$ $s, p\in (0,1).$ Then the generalized Lebesgue points exist outside a set of $\mathcal{H}^{h}$-measure zero, where $h(t)=t^{n-sp}\log^{-p-\epsilon}(1/t)$ for any $\epsilon>0.$
\end{theorem}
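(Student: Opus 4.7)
The plan is to emulate the proof of Theorem~\ref{second-main} directly, replacing the fractional $s$-gradient by the natural Gagliardo function, since for $p<1$ one cannot in general identify $W^{s,p}(\mathbb{R}^n)$ with $M^s_{p,p}(\mathbb{R}^n)$ and the capacity framework of the previous section is unavailable. For $u\in W^{s,p}(\mathbb{R}^n)$ set
$$g(x):=\left(\int_{\mathbb{R}^n}\frac{|u(x)-u(y)|^p}{|x-y|^{n+sp}}\,dy\right)^{1/p},$$
so that $\|g\|_{L^p(\mathbb{R}^n)}^p=[u]_{W^{s,p}}^p<\infty$. The key substitute for Lemma~\ref{poincare-type-lem} is the median Poincar\'e estimate
$$\inf_{c\in\mathbb{R}}m^{\gamma}_{|u-c|}(B(x,2^{-k}))\le C\,2^{-ks}\left(\dashint_{B(x,2^{-k+1})}g^p\,dy\right)^{1/p},$$
valid for every $x\in\mathbb{R}^n$ and $k\in\mathbb{Z}$. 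It is derived from property~(e) of the $\gamma$-median together with the elementary bound $\dashint_B\!\dashint_B|u(y)-u(z)|^p\,dydz\lesssim r^{sp}\dashint_{2B}g^p\,dy$, obtained by inserting $|y-z|\le 2r$ in the denominator of $g$ and applying Fubini.

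Having fixed the Borel representative $\tilde u(x):=\lim_{r\to 0}m_u^{\gamma}(B(x,r))$ (where defined), the complement of the set of generalized Lebesgue points equals the set of non-convergence of the medians. This set decomposes as a countable union over $\lambda=1/\ell$, $\ell\in\mathbb{N}$, of
$$E_\lambda:=\Bigl\{x\in\mathbb{R}^n:\sum_{k\ge m}\bigl|m_u^{\gamma}(B(x,2^{-k-1}))-m_u^{\gamma}(B(x,2^{-k}))\bigr|\ge\lambda\text{ for every }m\in\mathbb{Z}\Bigr\}.$$
A dyadic median telescoping identical to that in the proof of Theorem~\ref{second-main}, together with the Poincar\'e estimate above, produces at every $x\in E_\lambda$ and every $m\in\mathbb{Z}$ the bound
$$\lambda\;\le\;C\sum_{k\ge m-4}2^{-ks}\left(\dashint_{B(x,2^{-k+1})}g^p\,dy\right)^{1/p}.$$
Applying the logarithmic-weight trick of Theorem~\ref{second-main} with $M=M(m):=C^p(\log 2)^{-p-\epsilon}\bigl(\sum_{k\ge m-4}(k-1)^{-1-\epsilon/p}\bigr)^p$ then forces, at each $x\in E_\lambda$, a scale $k_x\ge m-4$ satisfying
$$\int_{B(x,2^{-k_x+1})}g^p\,dy\;\ge\;\frac{\lambda^p}{M(m)}\cdot\frac{\mu(B(x,2^{-k_x+1}))}{(2^{-k_x+1})^{sp}}\log^{-p-\epsilon}\!\bigl(2^{k_x-1}\bigr).$$

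The $5B$-covering lemma provides a disjoint family $\{B_i\}$ of these balls with radii at most $2^{-m+5}$ and with $\{5B_i\}$ covering $E_\lambda$; summing yields $\sum_i h(r_i)\le C\lambda^{-p}M(m)\|g\|_{L^p}^p$. Since $M(m)\to 0$ as $m\to\infty$ (it is the $p$-th power of a tail of a convergent series) and the radii of the covering simultaneously shrink to zero, we conclude $\mathcal{H}^h(E_\lambda)=0$ for every $\lambda>0$, and hence the set of non-generalized-Lebesgue points has $\mathcal{H}^h$-measure zero. The principal obstacle, and the only genuinely new ingredient compared with the proof of Theorem~\ref{second-main}, is verifying the median Poincar\'e estimate in the quasi-norm regime $p<1$: one must track constants carefully because Minkowski's inequality is replaced by the $p$-triangle inequality $|a+b|^p\le|a|^p+|b|^p$, and one must also check that the $\gamma$-medians $a_k=m_u^{\gamma}(B(x,2^{-k}))$ stay within a uniform constant of the minimizers of $c\mapsto m_{|u-c|}^{\gamma'}(B(x,2^{-k}))$ so that the telescoping, driven by properties~(c) and~(d), really is controlled by the right-hand side of the Poincar\'e estimate.
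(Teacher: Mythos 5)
Your argument is essentially correct, but it takes a genuinely different route from the paper's. You transplant the whole machinery of Theorem \ref{second-main}: the Gagliardo function $g(x)=\bigl(\int|u(x)-u(y)|^p|x-y|^{-n-sp}\,dy\bigr)^{1/p}$ as the gradient substitute, a median Poincar\'e inequality, the telescoping of dyadic medians, the logarithmic-weight contradiction with the constant $M(m)$, and the $5B$-covering. The paper instead works with the dyadic median-oscillation maximal function $g(x)=\sup_j 2^{js}\bigl(\dashint_{B(x,2^{-j})}|u(z)-m_u^\gamma(B(x,2^{-j}))|^p\,dz\bigr)^{1/p}$, shows directly that $\|g\|_{L^p}\lesssim\|u\|_{W^{s,p}}$, bounds consecutive median differences by $2^{-j(s-n/p)}\bigl(\int_{B(x,2^{-j})}g^p\bigr)^{1/p}$ via an auxiliary ball in the annulus, and then \emph{avoids} the logarithmic contradiction entirely: it defines the exceptional set with an intermediate gauge $h_1(t)=t^{n-sp}\log^{-p-\epsilon/2}(1/t)$, proves $\mathcal{H}^{h_1}(E_\epsilon)<\infty$ by one covering argument, and concludes $\mathcal{H}^h(E_\epsilon)=0$ from $h=o(h_1)$. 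Your mechanism for getting measure zero --- letting $m\to\infty$ so that $M(m)$, the $p$-th power of a tail of a convergent series, tends to $0$ --- is a legitimate alternative to the two-gauge trick and is perhaps more self-contained, at the cost of the $E_\lambda$ decomposition (which is in fact redundant: if the medians are not Cauchy, every tail sum of consecutive differences is infinite, so $E_1$ already contains the whole bad set). The one step you rightly flag, the median Poincar\'e inequality for $p<1$, is not actually an obstacle: property (e) of medians holds for all $p>0$, and choosing $c=u(z)$ for a point $z\in B$ realizing at most the double average $\dashint_B\dashint_B|u(y)-u(z)|^p\,dy\,dz\lesssim r^{sp}\dashint_B g^p$ gives the estimate without any use of Minkowski's inequality; likewise the comparison $m^{\gamma'}_{|u-m^\gamma_u(B)|}(B)\le 2\inf_c m^{\gamma'}_{|u-c|}(B)$ uses only properties (c) and (d) and is insensitive to $p$. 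Both proofs share the same gap in precision at the very end --- convergence is established only along the dyadic scales $2^{-j}$, and upgrading to $r\to0$ requires property (b) --- so you are in good company there.
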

\begin{proof}
For $x\in\mathbb{R}^n$, set
\begin{equation*}
g(x)=\sup_{j\in\mathbb{N}}\, 2^{js}\left(\dashint_{B(x,2^{-j})}\vert u(z)-m_u^{\gamma}(B(x,2^{-j}))\vert^p\, dz\right)^{1/p}.
\end{equation*}
For each $j,$ using the properties of medians, we have
\begin{eqnarray*}
& & \dashint_{B(x,2^{-j})}\vert u(z)-m_u^{\gamma}(B(x,2^{-j}))\vert^p\, dz\\
&\leq & \dashint_{B(x,2^{-j})}\vert u(z)-u(x)\vert^p\, dz+\vert u(x)-m_u^{\gamma}(B(x,2^{-j}))\vert^p\\
&\lesssim & \dashint_{B(x,2^{-j})}\vert u(z)-u(x)\vert^p\, dz\\
&\lesssim & 2^{-jsp}\int_{B(x,2^{-j})}\frac{\vert u(z)-u(x)\vert^ p}{2^{-j(n+sp)}}\, dz\\
&\lesssim & 2^{-jsp}\int_{B(x,2^{-j})}\frac{\vert u(z)-u(x)\vert^ p}{\vert z-x\vert^{n+sp}}\, dz.
\end{eqnarray*}
Hence
\begin{equation*}
g(x)\lesssim \left(\int_{\mathbb{R}^n}\frac{\vert u(z)-u(x)\vert^ p}{\vert z-x\vert^{n+sp}}\, dz\right)^{1/p},
\end{equation*}
which implies that $\vert\vert g\vert\vert_{L^p(\mathbb{R}^n)}\lesssim\vert\vert u\vert\vert_{W^{s,p}(\mathbb{R}^n)}<\infty.$\\
For each $j,$ choose a ball $B(x_j,2^{-j-2})\subset B(x,2^{-j})\setminus B(x,2^{-j-1}).$ For each $z\in B(x_j,2^{-j-2}),$ we have $B(x,2^{-j-1})\subset B(x,2^{-j})\subset B(z,2^{-j+2})$ and hence, by using the properties of medians,
\begin{eqnarray*}
& &\vert m_u^{\gamma}(B(x,2^{-j}))-m_u^{\gamma}(B(x,2^{-j-1}))\vert \\
&\leq & \vert m_u^{\gamma}(B(x,2^{-j}))-m_u^{\gamma}(B(z,2^{-j+2}))\vert + \vert m_u^{\gamma}(B(x,2^{-j-1}))-m_u^{\gamma}(B(z,2^{-j+2}))\vert\\
&\lesssim & \sum_{l=j-1}^{j}\left(\dashint_{B(x,2^{-l})}\vert u(\omega)-m_u^{\gamma}(B(z,2^{-j+2}))\vert^p\, d\omega\right)^{1/p}\\
&\lesssim & \left(\dashint_{B(z,2^{-j+2})}\vert u(\omega)-m_u^{\gamma}(B(z,2^{-j+2}))\vert^p\, d\omega\right)^{1/p}\\
&\lesssim & 2^{-js}g(z),
\end{eqnarray*}
which implies that
\begin{eqnarray*}
\vert m_u^{\gamma}(B(x,2^{-j}))-m_u^{\gamma}(B(x,2^{-j-1}))\vert &\lesssim & \frac{2^{-js}}{\vert B(x_j,2^{-j-2})\vert^{1/p}}\left(\int_{B(x_j,2^{-j-2})}[g(z)]^p\,dz\right)^{\frac{1}{p}}\\
&\lesssim & 2^{-j(s-n/p)}\left(\int_{B(x,2^{-j})}[g(z)]^p\,dz\right)^{\frac{1}{p}}.
\end{eqnarray*}
Therefore, for $l,m\in\mathbb{N}$ with $l>m,$
\begin{equation}\label{cauchy}
\vert m_u^{\gamma}(B(x,2^{-l}))-m_u^{\gamma}(B(x,2^{-m}))\vert\lesssim \sum_{j=m}^{l-1}2^{-j(s-n/p)}\left(\int_{B(x,2^{-j})}[g(z)]^p\,dz\right)^{\frac{1}{p}}.
\end{equation}
Let $h_1(t)=t^{n-sp}\log^{-p-\epsilon/2}(1/t).$ If $\int_{B(x,r)}[g(z)]^p\,dz\leq Ch_1(r)$ for some constant $C$ and for all sufficiently small $0<r<1/5,$ then $\{m_u^{\gamma}(B(x,2^{-j}))\}_j$ is a Cauchy sequence, by \eqref{cauchy}. We would like to show that the set
\begin{multline*}
E_\epsilon=\bigg\{x\in\mathbb{R}^n :\text{there exists arbitrarily small}~0<r_x<\frac{1}{5}~\text{such that}\\ \int_{B(x,r_x)}[g(z)]^p\,dz\geq Ch_1(r_x)\bigg\}
\end{multline*}
has Hausdorff $h$-measure zero. Towards this end, let $0<\delta<1/5.$ By using $5B$-covering lemma we obtain a pairwise disjoint family $\mathcal{G}$ consisting of balls as above such that $$E_{\epsilon}\subset\bigcup_{B\in\mathcal{G}}5B,$$
where $\diam(B)<2\delta$ for $B\in\mathcal{G}.$ Then
\begin{eqnarray*}
\mathcal{H}_{10\delta}^{h_1}(E_{\epsilon}) & \leq & C\sum_{B\in\mathcal{G}} h_1\left(\rad(B)\right)\\
& \leq & C_1\sum_{B\in\mathcal{G}} \int_{B}[g(z)]^p\,dz\\
& \leq & C_1\int_{\bigcup\limits_{B\in\mathcal{G}}B}[g(z)]^p\,dz<\infty.
\end{eqnarray*}
It follows that $\mathcal{H}^{h_1}(E_{\epsilon})<\infty$ and hence $\mathcal{H}^{h}(E_{\epsilon})=0.$ This shows that $\lim_{r\rightarrow 0}m_u^{\gamma}(B(x,r))$ exists outside $E_{\epsilon}$ with $\mathcal{H}^{h}(E_{\epsilon})=0,$ where $h(t)=t^{n-sp}\log^{-p-\epsilon}(1/t).$
\end{proof}

\def\bibname{References}
\bibliography{Triebel_Lizorkin}
\bibliographystyle{plain}
\end{document}